\pgfplotsset{compat=newest}
\newlength\fwidth
\title{\LARGE \bf
      Fixed-Order $\mathcal{H}_2$-Conic Control*
}
\author{Ethan J. LoCicero$^1$ and Leila Bridgeman$^1$
\thanks{*\copyright 2021 IEEE.  Personal use of this material is permitted.  Permission from IEEE must be obtained for all other uses, in any current or future media, including reprinting/republishing this material for advertising or promotional purposes, creating new collective works, for resale or redistribution to servers or lists, or reuse of any copyrighted component of this work in other works.}%
\thanks{This material is based upon work supported by the National Science Foundation Graduate Research Fellowship Program under Grant No. 1644868 and by the Alfred P. Sloan Foundation Minority Ph.D. Program.}
\thanks{$^{1}$Ethan J. LoCicero (PhD Student) and Leila Bridgeman (assistant Professor) are with the Dept. of Mechanical Eng. and Materials Science at Duke University, Durham, NC, 27708, USA (email: {\tt\small ejl48@duke.edu; ljb48@duke.edu}, phone: 919-660-5310), corresponding author: Ethan J. LoCicero}%
}
\newtheorem{define}{Definition}
\newtheorem{theorem}{Theorem}
\newtheorem{lemma}{Lemma}
\newtheorem{cor}{Corollary}
\newcommand\Autoref[1]{\@first@ref#1,@}
\def\@throw@dot#1.#2@{#1}
\def\@set@refname#1{
    \edef\@tmp{\getrefbykeydefault{#1}{anchor}{}}%
    \xdef\@tmp{\expandafter\@throw@dot\@tmp.@}%
    \ltx@IfUndefined{\@tmp autorefnameplural}%
         {\def\@refname{\@nameuse{\@tmp autorefname}s}}%
         {\def\@refname{\@nameuse{\@tmp autorefnameplural}}}%
}
\def\@first@ref#1,#2{%
  \ifx#2@\autoref{#1}\let\@nextref\@gobble
  \else%
    \@set@refname{#1}
    \@refname~\ref{#1}
    \let\@nextref\@next@ref
  \fi%
  \@nextref#2%
}
\def\@next@ref#1,#2{%
   \ifx#2@ and~\ref{#1}\let\@nextref\@gobble
   \else, \ref{#1}
   \fi%
   \@nextref#2%
}
\newcommand{\Chat}{\widehat{\mathbf{C}}} 
\newcommand{\Ahat}{\widehat{\mathbf{A}}} 
\newcommand{\Bhat}{\widehat{\mathbf{B}}} 
\newcommand{\I}{\mathbf{I}} 
\newcommand{\mbf}[1]{\mathbf{#1}} 
\newcommand{\bmat}[1]{\begin{bmatrix} #1 \end{bmatrix}} 
\newcommand{\Hcal}{\mathcal{H}} 
\newcommand{\cone}{\mbox{cone}} 
\newcommand{\xhat}{\hat{\mathbf{x}}} 
\newcommand{\A}{\mathbf{A}} 
\newcommand{\B}{\mathbf{B}} 
\newcommand{\C}{\mathbf{C}} 
\newcommand{\D}{\mathbf{D}} 
\newcommand{\x}{\mathbf{x}} 
\newcommand{\bu}{\mathbf{u}} 
\newcommand{\y}{\mathbf{y}} 
\newcommand{\yhat}{\hat{\mathbf{y}}} 
\newcommand{\w}{\mathbf{w}} 
\newcommand{\z}{\mathbf{z}} 
\newcommand{\bP}{\mathbf{P}} 
\newcommand{\Q}{\mathbf{Q}} 
\newcommand{\tr}{\mbox{tr}} 
\newcommand{\K}{\mbf{K}} 
\newcommand{\E}{\mbf{E}} 
\newcommand{\bS}{\mbf{S}} 
\newcommand{\R}{\mbf{R}}
\newcommand{\F}{\mbf{F}}
\newcommand{\Qt}{\widetilde{\Q}}
\newcommand{\dQ}{\delta\Q}
\newcommand{\dK}{\delta\K}
\newcommand{\He}{\mbox{He}}
\newcommand{\M}{\mbf{M}}
\newcommand{\bL}{\mbf{L}}
\newcommand{\W}{\mbf{W}}
\newcommand{\V}{\mbf{V}}
\newcommand{\Pt}{\widetilde{\mbf{P}}}
\newcommand{\dP}{\delta\mbf{P}}
\newcommand{\dPt}{\delta\widetilde{\mbf{P}}}
\newcommand{\X}{\mbf{X}}
\newcommand{\Gcal}{\mathcal{G}}
\newcommand{\At}{\widetilde{\A}}
\newcommand{\Bt}{\widetilde{\B}}
\newcommand{\Ct}{\widetilde{\C}}
\newcommand{\st}{\mbox{s.t.}}
\newcommand{\Htwo}{{\mathcal{H}_2}}
\newcommand{\zero}{\mbf{0}}
\newcommand{\Z}{\mbf{Z}}
\begin{document}

\maketitle
\thispagestyle{empty}
\pagestyle{empty}

\begin{abstract}
$\Hcal_2$-conic controller design seeks to minimize the closed-loop $\Hcal_2$ norm for a nominal linear system while satisfying the Conic Sector Theorem for nonlinear stability. This problem has only been posed with limited design freedom, as opposed to fixed-order design where all controller parameters except the number of state estimates are free variables. Here, the fixed-order $\Hcal_2$-conic design problem is reformulated as a convergent series of convex approximations using iterative convex overbounding. A synthesis algorithm and various initializations are proposed. The synthesis is applied to a passivity-violated system with uncertain parameters and compared to benchmark controller designs.
\end{abstract}


\section{Introduction} \label{introduction}




Recently, input-output (IO) analysis and stability theorems have emerged as useful tools in the fields of robust control, data-driven control, and robust neural networks \cite{Romer2019, Martin2021, Ghodrat2020, Fazlyab2020,Berberich2020, Dong2020, McCourt2020, Turner2020, Polushin2019}. For robust controller synthesis, conic-sector-based methods rely on more general stability results than the familiar Passivity and Small Gain Theorems, which often improves the performance-robustness tradeoff \cite{Bridgeman2014, Sivaranjani2018, Brown2021, Bridgeman2019a, Joshi2000, Joshi2002, Xia2020, Bridgeman2018, Gupta1994}. However, current design methods allow incomplete access to controller parameters. This erodes performance and impedes the addition of secondary objectives like sparsity promotion. Current designs also do not admit reduced order controllers. This paper proposes a fixed-order conic-sector-based design method that allows complete access to controller parameters for a controller of any specified dimension, improving design flexibility and admitting secondary objectives.

Performance versus robustness to model uncertainty is a fundamental tradeoff in automatic feedback control. This can be seen in design methods like $\Hcal_2/\Hcal_\infty$ control \cite{Khargonekar1991} and $\Hcal_2$-SPR \cite{Geromel1997a} control, where the $\Hcal_2$ performance objective is optimized subject to the Small Gain and Passivity Theorems, respectively. These IO stability results guarantee closed-loop stability for nonlinear, uncertain plants.

Passivity and small gain, though widely used, are not always applicable. Several other results -- large gain, $\gamma$-passivity, and passivity indices -- have been developed for cases that passivity and small gain cannot handle. It was recetly shown that these are all special cases of the Extended Conic Sector Theorem \cite{Bridgeman2018}. Zames' Conic Sector Theorem (CST) \cite{Zames1966} predates many IO stability theorems, but it was relatively unused for several decades due to its analytical complexity. Even more general IO results -- like QSR dissipativity \cite{Berberich2020,Brogliato2007} -- present similar analytical challenges. However, new tools have been developed to facilitate conic-sector-based analysis and design. Matrix inequality constraints that impose conic bounds on a linear system were developed using the KYP Lemma in the 90's \cite{Gupta1994}. Since then, several design schemes attempting to minimize the $\mathcal{H}_2$-norm subject to the conic constraint have been proposed \cite{Bridgeman2019a,  Sivaranjani2018, Brown2021, Bridgeman2014, Joshi2000, Joshi2002}. These ``$\Hcal_2$-conic" designs can be thought of as a stepping-stone to future QSR-dissipativity-based designs. However, simpler characterizations involved in conic properties facilitate interpretability and informed design choices.

The primary challenge in $\Hcal_2$-conic design is the nonconvexity of the cubic objective and bilinear constraints. This makes the problem NP-hard, so no method can guarantee polynomial-time convergence to a global minimum  \cite{Toker1995}. The aforementioned design schemes instead restrict design freedom in various ways, resulting in a conservative convex problem or series thereof. The most straightforward method fixes the observer and designs only the state-estimate feedback matrix using a convex heuristic proxy for the $\Hcal_2$ norm \cite{Bridgeman2014}. Other methods enforce a Luenberger structure on the controller and transform controller parameters into implicit variables to achieve convex constraints and a convex overbound on the $\Hcal_2$-norm \cite{Bridgeman2019a, Joshi2000, Joshi2002}. The restricted design space in these methods inevitably reduces performance, and forcing the controller order to match that of the plant is problematic for large systems. Further, incomplete access to controller parameters impedes addition of secondary objectives like sparsity promotion and structured communication, which are important to stability and security problems in distributed design \cite{Eilbrecht2017, Sandell1978, Chowdhury2019}. The present work serves as an intermediate result towards structured $\Hcal_2$-conic designs.

This paper addresses the above limitations by proposing a fixed-order $\Hcal_2$-conic design scheme. In this context, ``fixed-order" means the number of state estimates $n_c$ is fixed, and controller parameters $\Ahat$, $\Bhat$, and $\Chat$ (as defined in Section~\ref{problemstatement}) are accessable design variables. This is as opposed to a ``Luenberger" controller, where the state-estimate dynamics matrix is assumed to have the structure $\Ahat = \A-\B_2\Chat-\Bhat\C_2$. This improved design is achieved by employing iterative convex overbounding (ICO) \cite{Warner2017a,Oliveira2000} to pose the nonconvex problem as a convergent series of convex problems without obfuscating the controller parameters.


\section{Preliminaries} \label{prelim}

\subsection{Notation} \label{notation}

For a square matrix $\bP$, positive definiteness is denoted $\bP > 0$. Related properties (negative definiteness and positive/negative semi-definiteness) are denoted correspondingly. The identity matrix is $\I$, the trace is $\tr(\cdot)$, and $\lambda(\cdot)$ is the eigenvalues. For brevity, $\He[\M] = \M + \M^T$ and astrisks denote duplicate blocks in symmetric matrices. The $L_2$, Frobenius, and $\mathcal{H}_2$ norms are denoted $||\cdot||_2$, $||\cdot||_F$, $||\cdot||_{\mathcal{H}_2}$. Recall $\y \in L_2$ if $||\y||_2^2 = \langle \y,\, \y \rangle = \int_0^\infty \y^T(t)\y(t)dt <\infty$. Further, $\y\in L_{2_e}$ if its truncation to $[0,\,T]$ is in $L_2$ $\forall$ $T\geq 0$, where the truncation of $\y$ is found by multiplying $\y(t)$ by 0 for $t>T$.


\subsection{Review of Conic Sectors} \label{review}
Conic sectors, defined below, are an IO description of a set of operators. They can be viewed as a special case of dissipativity.
\medskip
\begin{define}
\textit{(Interior conic \cite{Bridgeman2014})} A square system, $\mathcal{G} : L_{2e}\rightarrow L_{2e}$, is in the conic sector $[a,\,b]$, where $a<b$ and $0<b$, denoted $\mathcal{G} \in \mbox{cone}[a,\,b]$, if $\forall\; \mathbf{u} \,\in\, L_{2e},\;T\,\in\, \mathbb{R}^+$,
\begin{eqnarray*}
\frac{1}{b}||\mathcal{G}\mathbf{u}||_{2T}^2 + \Big(1+\frac{a}{b}\Big)\, \langle\mathcal{G}\mathbf{u},\,\mathbf{u}\rangle_T - a ||\mathbf{u}||_{2T}^2 \geq \beta,
\end{eqnarray*}
where $\beta$ only depends on initial conditions. It is \textit{strictly} in the conic sector, denoted $\mathcal{G}\in\mbox{cone}(a,\,b)$, if $\mathcal{G} \in \mbox{cone}[a+\delta,\,b-\delta]$, for some small $\delta > 0$.
\end{define}
\medskip


The Conic Sector Lemma (CSL) below shows a matrix inequality expression of conic bounds for linear time invariant (LTI) systems without feedthrough.
\medskip
\begin{lemma} \label{CSL}
\textit{(Conic Sector Lemma \cite{Gupta1994})} A square LTI system, $\mathcal{G}:L_{2e}\rightarrow L_{2e}$, with minimal state-space realization $(\mathbf{A},\,\mathbf{B},\,\mathbf{C},\,\mathbf{0})$ is inside cone $[a,\,b]$, $a < 0 < b < \infty$ if and only if there exists $\mathbf{P} = \mathbf{P}^T > 0$, such that 
\begin{eqnarray}
	&&\bmat{\bP\A+\A^T\bP + \C^T\C &* \\ \B^T\bP - \frac{a+b}{2}\C & ab\I } \leq 0 \label{eqn:CSL} \\
	\mbox{or} && \nonumber\\
	&& \bmat{\bP\A + \A\bP & \bP\B & \C^T \\ * & -\frac{(a-b)^2}{4b}\I & -\frac{(a+b)}{2}\I \\ * & * & -b\I} \leq 0 \label{dialated_conic}\\
	\mbox{or} && \nonumber\\
	&&\bP(\A+\frac{a+b}{2ab}\B\C) + (\A^T+\frac{a+b}{2ab}\C^T\B^T)\bP \nonumber \\
	&&+ (1-\frac{(a+b)^2}{4ab})\C^T\C - \frac{1}{ab}\bP\B\B^T\bP \leq 0. \label{contracted_conic}
\end{eqnarray}
\end{lemma}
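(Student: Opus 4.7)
The plan is to prove the Conic Sector Lemma by first establishing (\ref{eqn:CSL}) via a Lyapunov/dissipativity-style argument, then reducing (\ref{dialated_conic}) and (\ref{contracted_conic}) to (\ref{eqn:CSL}) through Schur complements. For the sufficiency direction of (\ref{eqn:CSL}), I would introduce the storage function $V(\x) = \x^T\bP\x \geq 0$. Differentiating along the minimal realization gives $\dot V = \x^T(\bP\A + \A^T\bP)\x + 2\x^T\bP\B\bu$. Pre- and post-multiplying (\ref{eqn:CSL}) by $[\x^T,\,\bu^T]$ and its transpose, then substituting $\y = \C\x$, recasts the LMI as the pointwise bound $\dot V + \y^T\y - (a+b)\y^T\bu + ab\,\bu^T\bu \leq 0$. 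Integrating over $[0,T]$, applying $V(\x(T)) \geq 0$, and dividing by $b > 0$ yields the cone $[a,b]$ inequality with $\beta = -V(\x(0))/b$ depending only on the initial state.

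Necessity is where I anticipate the main difficulty and where minimality of $(\A,\B,\C,\zero)$ becomes indispensable. Parseval's identity converts the cone condition into a frequency-domain inequality on $\mathcal{G}(s) = \C(s\I - \A)^{-1}\B$, namely $-\mathcal{G}(j\omega)^*\mathcal{G}(j\omega) + (a+b)\operatorname{Re}\mathcal{G}(j\omega) - ab\I \geq 0$. Applying the KYP lemma to this inequality produces a symmetric $\bP$ satisfying (\ref{eqn:CSL}); strict positive definiteness $\bP > 0$ then follows from observability of $(\A,\C)$ together with the sign separation $a < 0 < b$. This last step is the delicate one, since KYP alone delivers only $\bP = \bP^T$, and the upgrade to $\bP > 0$ requires tying the storage function to the observability Gramian under the minimality hypothesis.

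Finally, the equivalence of the three LMIs is purely algebraic via Schur complements. Because $a < 0 < b$ forces $ab\I < 0$, taking the Schur complement of (\ref{eqn:CSL}) with respect to its $(2,2)$ block produces (\ref{contracted_conic}) directly. Form (\ref{dialated_conic}) is a dilated representation: applying the Schur complement to its strictly negative $(3,3)$ block $-b\I$ collapses it to (\ref{eqn:CSL}) modulo the rescaling $\bP \mapsto b\bP$, which preserves positive definiteness since $b > 0$; the cross-term arithmetic works out because $(a+b)^2 - (a-b)^2 = 4ab$, reproducing the $ab\I$ block. These manipulations are routine once the block structure is recognized, so the KYP invocation remains the only genuine technical obstacle.
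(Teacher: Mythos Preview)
The paper does not supply its own proof of Lemma~\ref{CSL}; it is quoted as a known result from \cite{Gupta1994}, followed only by the remark that Equations~\ref{eqn:CSL}--\ref{contracted_conic} are equivalent and that the passive case is recovered in the limit $a=0$, $b\to\infty$. There is therefore no in-paper argument to compare against.

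Your sketch is the standard route and is essentially correct. The sufficiency half via the quadratic storage $V(\x)=\x^T\bP\x$ is routine, and your identification of the KYP step as the only substantive obstacle in the necessity direction is accurate: minimality is exactly what upgrades the symmetric KYP certificate to $\bP>0$. The Schur-complement reductions also check out. For \eqref{contracted_conic} the computation is immediate since $ab\I<0$. For \eqref{dialated_conic}, eliminating the $(3,3)$ block $-b\I$ yields the $2\times2$ block matrix with $(1,1)$ entry $\bP\A+\A^T\bP+\tfrac{1}{b}\C^T\C$, $(2,1)$ entry $\B^T\bP-\tfrac{a+b}{2b}\C$, and $(2,2)$ entry $a\I$ (using $(a+b)^2-(a-b)^2=4ab$); multiplying this inequality by the positive scalar $b$ and renaming $b\bP\mapsto\bP$ recovers \eqref{eqn:CSL} exactly, as you indicated. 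One minor point: the paper's \eqref{dialated_conic} prints $\bP\A+\A\bP$ in the $(1,1)$ block, which is evidently a typographical slip for $\bP\A+\A^T\bP$; your argument implicitly (and correctly) reads it that way.
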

Equations~\ref{eqn:CSL}--\ref{contracted_conic} are equivallent. The conic sector with $a=0$ and $b=\infty$ is identical to passvity. In fact, dividing \autoref{eqn:CSL} by $b>0$, setting $a=0$, and taking the limit as $b \rightarrow \infty$  recovers the KYP Lemma for passivity \cite[section 3]{Brogliato2007}.
 


In this paper, the CSL will be used in combination with the CST to ensure closed-loop IO stablity. The CST statement below is more restricted than the original formulation in \cite{Zames1966} but contains all cases needed in this paper.
\medskip
\begin{theorem}
\textit{(CST \cite{Zames1966, Bridgeman2014})} Consider the negative feedback interconnection of two square systems, $\mathcal{G}_1:L_{2e}\rightarrow L_{2e}$ and $\mathcal{G}_2 : L_{2e}\rightarrow L_{2e}$, defined by
$\mathbf{y}_1 = \mathcal{G}_1\mathbf{u}_1$, $\mathbf{y}_2 = \mathcal{G}_2\mathbf{u}_2$,
$\mathbf{u}_1 = \mathbf{r}_1 - \mathbf{y}_2$, and $\mathbf{u}_2 = \mathbf{r}_2 -\mathbf{y}_1$,
where $\mathbf{r}_i, \, \mathbf{u}_i, \, \mathbf{y}_i \, \in \, L_{2e}$ for $i = 1,\,2$ (see \autoref{conicfeedback}). If $\mathcal{G}_1 \in \mbox{cone}[a,\,b]$ for $a < 0 < b $ and $\mathcal{G}_2 \, \in \, \mbox{cone}(-\frac{1}{b},\,-\frac{1}{a})$ then the closed-loop system $\mathbf{y} = \mathcal{G}\mathbf{r}$, where $\mathbf{r}^T = [\mathbf{r}_1^T \; \mathbf{r}_2^T]$ and $\mathbf{y}^T = [\mathbf{y}_1^T \; \mathbf{y}_2^T]$, is input-output stable.
\label{thm:CST} 
\end{theorem}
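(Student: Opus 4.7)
The plan is to derive $L_2$ bounds on $\mathbf{y}_1$ and $\mathbf{y}_2$ in terms of $\mathbf{r}_1$ and $\mathbf{r}_2$, which is exactly input-output stability. First I would instantiate the conic inequality of $\cone[a,b]$ for $\mathcal{G}_1$, and, invoking strictness, the conic inequality for $\mathcal{G}_2$ with bounds $a' = -1/b + \delta$ and $b' = -1/a - \delta$ for some small $\delta > 0$. I would then substitute $\mathbf{u}_1 = \mathbf{r}_1 - \mathbf{y}_2$ and $\mathbf{u}_2 = \mathbf{r}_2 - \mathbf{y}_1$ into every occurrence of $\mathbf{u}_i$, producing expressions purely in $\mathbf{r}_i$, $\mathbf{y}_1$, and $\mathbf{y}_2$ plus cross terms of the form $\langle\mathbf{y}_1,\mathbf{y}_2\rangle_T$.

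Next I would form a positive combination $\lambda_1$ times the first inequality plus $\lambda_2$ times the second, with $\lambda_1,\lambda_2>0$ selected so that the cross term $\langle\mathbf{y}_1,\mathbf{y}_2\rangle_T$ vanishes. Such weights exist precisely because the cone hypothesis on $\mathcal{G}_2$ is the conjugate cone $(-1/b,-1/a)$ of the cone on $\mathcal{G}_1$; this algebraic matching is the structural reason the theorem holds, and the strictness $\delta>0$ leaves headroom for the resulting $\|\mathbf{y}_i\|_{2T}^2$ coefficients to be strictly positive rather than merely nonnegative.

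After the cross term cancels, the remaining inequality, upon expanding $\|\mathbf{u}_i\|_{2T}^2$ via the feedback relations, contains only $\|\mathbf{y}_i\|_{2T}^2$, $\|\mathbf{r}_i\|_{2T}^2$, and mixed inner products $\langle\mathbf{y}_i,\mathbf{r}_j\rangle_T$. I would then apply Young's inequality $2\langle\mathbf{a},\mathbf{b}\rangle \le \epsilon\|\mathbf{a}\|^2+\epsilon^{-1}\|\mathbf{b}\|^2$ with $\epsilon$ chosen small enough to absorb every mixed term without destroying positivity of the output coefficients. This delivers a bound of the form $\kappa_1\|\mathbf{y}_1\|_{2T}^2+\kappa_2\|\mathbf{y}_2\|_{2T}^2 \le \mu_1\|\mathbf{r}_1\|_{2T}^2+\mu_2\|\mathbf{r}_2\|_{2T}^2+\beta$, with $\kappa_i,\mu_i>0$ and $\beta$ independent of $T$.

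Letting $T\to\infty$ then concludes the proof: if $\mathbf{r}_i\in L_2$, then $\mathbf{y}_i\in L_2$, and consequently $\mathbf{u}_i\in L_2$, giving the required input-output stability. The main obstacle is step two---exhibiting positive multipliers $\lambda_1,\lambda_2$ that simultaneously kill the $\langle\mathbf{y}_1,\mathbf{y}_2\rangle_T$ term and leave strictly positive coefficients on $\|\mathbf{y}_1\|_{2T}^2$ and $\|\mathbf{y}_2\|_{2T}^2$. This requires a careful sign analysis that splits on whether $1+a/b$ is positive, negative, or zero, and is precisely where the conjugate-cone hypothesis and the strictness of $\mathcal{G}_2$'s cone are both indispensable.
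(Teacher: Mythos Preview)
The paper does not prove this theorem; it is stated as a cited result from \cite{Zames1966, Bridgeman2014} with no accompanying argument, so there is no ``paper's own proof'' to compare against. Your outline follows the classical Zames-style approach---instantiate the two conic inequalities, substitute the feedback relations, take a positive combination chosen so the cross term $\langle\mathbf{y}_1,\mathbf{y}_2\rangle_T$ cancels, then absorb the remaining mixed terms via Young's inequality---and this is indeed the standard route to the Conic Sector Theorem.

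One technical caution: the conic definition in the paper (Definition~1) divides by $b$, so when you instantiate the inequality for $\mathcal{G}_2$ with upper bound $b' = -1/a - \delta$ you must verify $b'>0$, which holds since $a<0$. Also, your ``step two'' sign analysis is genuinely the crux, but the split you propose on the sign of $1+a/b$ is not quite the right organizing variable; what matters is that the coefficient of $\langle\mathbf{y}_1,\mathbf{y}_2\rangle_T$ coming from the $\mathcal{G}_1$ inequality (after substituting $\mathbf{u}_1 = \mathbf{r}_1 - \mathbf{y}_2$) is $(1+a/b)$, while the coefficient from the $\mathcal{G}_2$ inequality (after substituting $\mathbf{u}_2 = \mathbf{r}_2 - \mathbf{y}_1$) is $(1+a'/b')$, and you need $\lambda_1(1+a/b) + \lambda_2(1+a'/b') \cdot (\text{appropriate sign}) = 0$ together with strict positivity of the $\|\mathbf{y}_i\|_{2T}^2$ coefficients. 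The conjugate-cone relation $a'b' \approx 1/(ab)$ and $a'+b' \approx -(a+b)/(ab)$ is exactly what makes this solvable, as you note; just be prepared for the algebra to be more tedious than a single case split suggests.
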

\medskip

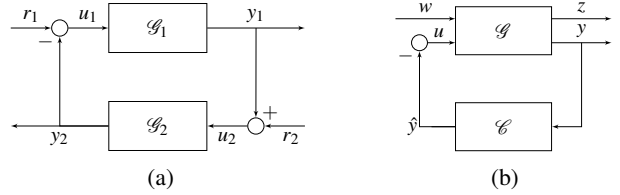
\begin{figure}
     \centering
     \begin{subfigure}[b]{0.47\columnwidth}
         \centering
         \resizebox{4cm}{!}{%
                 \begin{tikzpicture}[auto, node distance=2.5cm,>=latex']
        \node [input, name=input] {};
        \node [sum, right of=input] (sum) {};
        \node [block, right of=sum, xshift=-0.5cm] (plant) {\large $\mathcal{G}_1$};
        \node [output, right of=plant, xshift=1cm] (output) {};
        \node [block, below of=plant, yshift=0.5cm] (feedback) {\large $\mathcal{G}_2$};
        \node [output, left of=feedback, xshift=-1cm] (output2) {};
        \node [sum, right of=feedback, xshift=1cm] (sum2) {};
         \node [input, right of=sum2] (input2) {};
        \draw [draw,->] (input) -- node {\large $r_1$} (sum);
        \draw [draw,->] (input2) -- node[pos=.7, anchor=north west] {\large $r_2$} (sum2);
        \draw [->] (sum) -- node {\large $u_1$} (plant);
        \draw [->] (sum2) -- node[anchor=north]{\large $u_2$} (feedback);
        \draw [->] (plant) -- node [name=y] {\large $y_1$}(output);
        \draw [->] (feedback) -- node [name=y2] {\large $y_2$}(output2);
        \draw [->] (y) -- node[pos=0.95] {\large $+$} node [near end] {} (sum2);
        \draw [->] (feedback) -| node[pos=0.97] {\large $-$} node [near end] {} (sum);
        \end{tikzpicture}
         }
         \caption{}
         \label{conicfeedback}
     \end{subfigure}
     \hfill
     \begin{subfigure}[b]{0.47\columnwidth}
         \centering
         \resizebox{3cm}{!}{%
                 \begin{tikzpicture}[auto, node distance=2.5cm,>=latex']
        \node [block, name=plant] {\large $\mathcal{G}$};
        \node[output, left of=plant, xshift=1cm, yshift=-0.25cm] (g_left_bottom) {};
        \node[output, right of=plant, xshift=-1cm, yshift=-0.25cm] (g_right_bottom) {};
        \node[output, left of=plant, xshift=1cm, yshift=0.25cm] (g_left_top) {};
        \node[output, right of=plant, xshift=-1cm, yshift=0.25cm] (g_right_top) {};
        \node [block, below of=plant, yshift=0.5cm] (feedback) {\large $\mathcal{C}$};
        \node [output, right of=plant, xshift=0.25cm, yshift=-0.25cm] (output) {};
        \draw [->] (g_right_bottom) -- node [name=y] {\large $y$}(output);
        \draw [->] (y) |- (feedback);
        \node [sum, left of=plant, xshift=-0.75cm, yshift=-0.25cm] (inverter) {};
        \draw [->] (feedback) -| node[pos=0.97] {\large $-$} node [near end] {} (inverter);
        \node[output, left of=feedback, xshift=0.5cm] (yhat) {};
        \draw (feedback) -- node[pos=1.35,anchor= east] {\large $\hat y$} (yhat);
        \draw [->] (inverter) -- node[pos=.4] {\large $u$} (g_left_bottom);
        \node[output, left of=plant, xshift=-0.25cm, yshift=0.25cm] (disturbance) {};
        \draw [->] (disturbance) -- node {\large $w$} (g_left_top);
        \node[output, right of=plant, xshift=0.25cm, yshift=0.25cm] (cout) {};
        \draw [->] (g_right_top) -- node {\large $z$} (cout);
        
        \end{tikzpicture}
         }
         \caption{}
         \label{fig_cl}
     \end{subfigure}
 \caption{Closed loops with (a) additive disturbances and (b) general disturbances and objectives.}
\end{figure}


If a system is in any conic sector, it is in infinitely many conic sectors. An appropriate conic sector for controller synthesis is one that maximizes controller design space while robustly characterizing plant uncertainty. Such a sector can be identified analytically or with the CSL\cite{Bridgeman2014}, frequency domain methods, \cite{Xia2020}, or data-driven methods \cite{Romer2019, Martin2021}.


\section{Problem Statement} \label{problemstatement}
Consider the system in Figure~\ref{fig_cl} with plant and controller
\begin{eqnarray*}
	\Gcal : \begin{cases}
	\dot{\x} = \A\x + \B_1\w + \B_2\bu \\
	\z = \C_1\x + \D_{12}\bu \\
	\y = \C_2\x + \D_{21}\w
	\end{cases},
&&
	\mathcal{C} : \begin{cases}
	\dot{\xhat} = \Ahat\xhat + \Bhat\y \\
	\yhat = \Chat\xhat
	\end{cases}.
\end{eqnarray*}
Here, $\x \in \mathbb{R}^n$, $\bu \in \mathbb{R}^m$, $\y \in \mathbb{R}^m$, $\w \in \mathbb{R}^p$, $\z \in \mathbb{R}^q$, $\xhat \in \mathbb{R}^{n_c}$, $\yhat \in \mathbb{R}^m$ are the states, inputs, measured outputs, distrubances, controlled outputs, state-estimates, and controller outputs, respectively.
The $\Htwo$-conic problem is
\begin{eqnarray} \label{general}
\! \! \! \!	\min_{\Ahat,\Bhat,\Chat} \; \; \; \; J = ||\Gcal_{cl}||_{\Htwo}^2 &\st& (\Ahat,\Bhat,\Chat,\zero) \in \cone(a_c,b_c),
\end{eqnarray}
where $\Gcal_{cl}$ is the closed-loop state space realization 
\begin{eqnarray*}
		\left[
	\begin{array}{c | c}
		\A_{cl} & \B_{cl} \\
		\hline
		\C_{cl} & \zero  \\
	\end{array}
	\right]
	&=&
	\left[
\begin{array}{c c | c}
	\A & -\B_2\Chat & \B_1 \\
	\Bhat\C_2 & \Ahat & \Bhat\D_{21} \\
	\hline
	\C_1 & -\D_{12}\Chat & \zero
\end{array}
\right].
\end{eqnarray*}
If the closed loop is observable and controllable and the plant satisfies the standard $\Htwo$ assumptions \cite{Geromel1997a}, then $||\Gcal_{cl}||_{\Htwo}^2 = \tr(\B_{cl}^T\Q\B_{cl})$, where the observability Gramian $\Q=\Q^T > 0$ solves $\A_{cl}^T\Q + \Q\A_{cl} + \C_{cl}^T\C_{cl} = \mbf{0}$ \cite{Dullerud2005}. Also, the CSL can enforce conic bounds on the controller through \autoref{dialated_conic}. 
 
 In the subsequent work, the transformation
\begin{eqnarray*}
	& \! \! \! \! \! \! \! \! \!
	\At = \bmat{\A & \mbf{0} \\ \mbf{0} & \mbf{0}},  \;
	\Bt = \bmat{\B_1 \\ \mbf{0}},  \;
	\Ct^T = \bmat{\C_1^T \\ \mbf{0}}, \;
	\mbf{E} = \bmat{-\B_2 & \mbf{0} \\ \mbf{0} & \I}, 
	&
	\\ 
	& \! \! \! \! \! \! \! \! \!
	\mbf{R} = \bmat{\C_2 & \mbf{0} \\ \mbf{0} & \I}, \;
	\mbf{S} = \bmat{\D_{21} \\ \mbf{0}}, \;
	\mbf{F}^T = \bmat{-\D_{12}^T \\ \mbf{0}}, \;
	\mbf{K} = \bmat{\mbf{0} & \Chat \\ \Bhat & \Ahat},
	&
	\\ 
	& \! \! \! \! \! \! \! \! \!
	\mbf{X}^T = \bmat{\mbf{0} & \I  \\ \I & \mbf{0} \\ \mbf{0} & \mbf{0}}, \; \Pt = \bmat{\mbf{0} & \bP \\ \mbf{0} & \mbf{0} \\ \I & \mbf{0}}, \; \Gamma = \bmat{\mbf{0} & * & * \\ \mbf{0} & -\frac{(a_c-b_c)^2}{4b_c}\I & * \\ \mbf{0} & -\frac{a_c+b_c}{2}\I  & -b_c\I},
	&
\end{eqnarray*}
which is an extension of the transformation used in \cite{Oliveira2000}, will greatly simplify notation. This transformation is also beneficial to some secondary objectives, like sparsity promotion \cite{Eilbrecht2017}, because it collects all controller parameters into a single matrix, $\K$. Now, the closed-loop state space realization and CSL inequality can be expressed as $\A_{cl} =  \At + \E\K\R$, $\B_{cl} =  \Bt + \E\K\bS $, $\C_{cl} =	\Ct + \F\K\R$, and $\He(\Pt\K\X) + \Gamma \leq 0$, and Problem~\ref{general} can be equivallently expressed
\begin{eqnarray} \label{KQP}
\!\!\!\!\!\!	\min_{\K,\Q,\bP} && \! \! \! \! \! \! \! \! \! \! \! \!  
	J(\K,\Q) = \tr((\Bt+\E\K\bS)^T\Q(\Bt+\E\K\bS)) \label{eqn:H2obj}\\
\!\!\!\!\!\!	\st && \! \! \! \! \! \! \! \! \! \! \! \! 
	\He[\Q(\At + \E\K\R)] + (\Ct+\F\K\R)^T(\Ct+\F\K\R) = \zero \label{eqn:lyapunov} \\
	&& \! \! \! \! \! \! \! \! \! \! \! \!  
	\He[\Pt\K\X] + \Gamma \leq 0 \label{eqn:conic} \\
	&& \! \! \! \! \! \! \! \! \! \! \! \!  
	\Q > 0, \qquad \bP > 0.  \label{eqn:P}	
\end{eqnarray}


\section{Main Result} \label{mainresult}

\subsection{Design Algorithm}
In this section, \autoref{main_theorem} establishes a convex optimization that minimizes an upper bound on the closed-loop $\Htwo$ norm subject to a conic controller constraint. The upper bound is nonconservative near the initialization point. Algorithm~1 uses \autoref{main_theorem} to iteratively reduce the $\Htwo$ norm from a feasible point to a local minimum. Corollary~\ref{thm:decreasing_bounded} shows that Algorithm 1 generates a sequence of monotonic decreasing costs bounded below, and it derives a finite bound on the number of iterations to convergence.

\begin{theorem} \label{main_theorem}
	Assume $\Q_0$, $\K_0$, and $\bP_0$ satisfy Equations~\ref{eqn:lyapunov}--\ref{eqn:P}. Then $\Q = \Q_0 + \dQ$, $\K = \K_0+ \dK$, and $\bP = \bP_0 + \dP$ also satisfy Equations \ref{eqn:lyapunov}--\ref{eqn:P}, where $\dQ$, $\dK$, and $\dP$ solve
	\begin{eqnarray} \label{deltaOpt2}
		\! \! \! \! \! \! \! \! \! \! \! \! \min_{\dQ,\dP,\dK,\mbf{Z}} &&  \! \! \! \! \! \! \! \! \! \! \!  J' = \tr(\Bt^T(\Q_0 + \dQ)\Bt) + \tr(\mbf{Z}) \label{main_obj} \\
		\mbox{s.t.}&&\! \! \! \! \! \! \! \! \! \! \! 
		\bmat{\Q_0^{-1} - \Q_0^{-1}\dQ\Q_0^{-1} & * \\  \bS^T(\K_0 + \dK)^T\E^T & \mbf{Z}} \geq 0  \label{c2} \\
		&& \! \! \! \! \! \! \! \! \! \! \!
		 \bmat{\Pi_1 & \dQ\E & \R^T\dK^T \\ * & -\W_1^{-1} & \mbf{0} \\ * & * & -(\W_1^{-1} + \F^T\F)^{-1}} \leq 0  \label{c1}\\
		&& 	\! \! \! \! \! \! \! \! \! \! \! 
		\bmat{\Pi_2 & \dPt & \X^T\dK^T \\ * & -\W_2^{-1} & \mbf{0} \\  * & * & -\W_2} \leq 0 \label{c3} \\
		&& \! \! \! \! \! \! \! \! \! \! \! 
		\Q_0 + \dQ > 0, \qquad \bP_0 + \dP > 0. \label{c5}
	\end{eqnarray}
and
\begin{eqnarray*}
	\Pi_1 &=& \He[\At^T\dQ + \At^T\Q_0 + \Q_0\E\K_0\R + \dQ\E\K_0\R     \\
	&&+ \Q_0\E\dK\R + \Ct^T\F\K_0\R + \Ct^T\F\dK\R \\
	&& + \R^T\K_0^T\F^T\F\dK\R] + \Ct^T\Ct + \R^T\K_0^T\F^T\F\K_0\R, \\
	\Pi_2 &=& \Gamma + \He[\Pt_0\K_0\X + \Pt_0\dK\X + \dPt\K_0\X], \\
	&&\W_1 = \W_1^T > 0 \qquad \mbox{and} \qquad \W_2 = \W_2^T  > 0.
\end{eqnarray*}
	Further, letting $\Q_{opt}$ and $\K_{opt}$ be the global minimizers of Problem~\ref{KQP}, it is true that
	\begin{eqnarray}
		J'(\K,\Q) &\geq& J(\K,\Q), \label{costConserve}\\
		J'(\K_0,\Q_0) &=& J(\K_0,\Q_0), \label{equality}\\
		 J'(\K_0,\Q_0)  &\geq& J'(\K,\Q), \label{upper_bound} \\
		 J(\K_{opt},\Q_{opt}) &\leq& J'(\K,\Q). \label{lower_bound}
		\end{eqnarray}
\end{theorem}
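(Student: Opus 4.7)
The plan is to prove the theorem in two phases. Phase (i) is feasibility transfer: showing that any $(\dQ,\dK,\dP)$ admissible for (\ref{c2})--(\ref{c5}) makes $\Q=\Q_0+\dQ$, $\K=\K_0+\dK$, $\bP=\bP_0+\dP$ admissible for (\ref{eqn:lyapunov})--(\ref{eqn:P}). Phase (ii) establishes the four cost relations (\ref{costConserve})--(\ref{lower_bound}). Throughout, the central technical tool is the matrix Young-type bound $\He[\M\N] \leq \M\W\M^T + \N^T\W^{-1}\N$, valid for any $\W=\W^T>0$, together with repeated Schur complements.

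For phase (i) on the Lyapunov constraint, I would expand $\He[\Q(\At + \E\K\R)] + (\Ct + \F\K\R)^T(\Ct + \F\K\R)$ at the perturbed point. Every term that is constant, linear in $\dQ$, linear in $\dK$, or bilinear between the frozen and the perturbed variables collects exactly into $\Pi_1$, leaving only two nonlinear residuals: the bilinear cross $\He[\dQ\E\dK\R]$ and the pure quadratic $\R^T\dK^T\F^T\F\dK\R$. Applying Young's inequality with weight $\W_1$ to $\M=\dQ\E$ and $\N=\dK\R$ overbounds the cross by $\dQ\E\W_1\E^T\dQ + \R^T\dK^T\W_1^{-1}\dK\R$; combined with the $\F^T\F$-weighted quadratic, a single Schur complement over two auxiliary block rows and columns yields precisely (\ref{c1}). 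The conic constraint follows by the same procedure applied to $\He[\Pt\K\X] + \Gamma$: the bilinear $\He[\dPt\dK\X]$ is overbounded with weight $\W_2$, everything else is gathered into $\Pi_2$, and one Schur complement produces (\ref{c3}). Condition (\ref{c5}) is simply (\ref{eqn:P}) in perturbation variables.

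For phase (ii), Schur complement of (\ref{c2}) gives $\mbf{Z} \geq \bS^T\K^T\E^T(\Q_0^{-1} - \Q_0^{-1}\dQ\Q_0^{-1})^{-1}\E\K\bS$, and the operator-convexity inequality $(\Q_0+\dQ)^{-1} \geq \Q_0^{-1} - \Q_0^{-1}\dQ\Q_0^{-1}$ (valid whenever $\Q_0,\Q_0+\dQ > 0$) then upper bounds $\tr(\bS^T\K^T\E^T\Q\E\K\bS)$ by $\tr(\mbf{Z})$. Adding $\tr(\Bt^T\Q\Bt)$ and invoking the standard $\Htwo$ assumption $\B_1\D_{21}^T = 0$, which kills the cross trace $\tr(\Bt^T\Q\E\K\bS)$, delivers (\ref{costConserve}). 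Equation (\ref{equality}) follows by substituting the zero perturbation and choosing $\mbf{Z}$ minimal, at which point every overbound used above collapses to equality. Inequality (\ref{upper_bound}) is immediate because the hypothesis on $(\Q_0,\K_0,\bP_0)$ certifies that the zero perturbation is LMI-feasible, and $(\dQ,\dK,\dP)$ are by definition minimizers of $J'$. Finally, (\ref{lower_bound}) is obtained by chaining phase (i) (LMI-feasible implies (\ref{KQP})-feasible) with (\ref{costConserve}) and global optimality of $(\K_{opt},\Q_{opt})$.

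The main obstacle is the bookkeeping in phase (i): one must verify term by term that the Lyapunov and conic expansions decompose exactly as claimed, that the two Schur steps nest correctly (with the $\F^T\F$ piece absorbed into the $(\W_1^{-1}+\F^T\F)^{-1}$ block), and that $\Pi_1,\Pi_2$ are precisely what remains after peeling off the Young-bounded residuals. A minor conceptual subtlety is that (\ref{c1}) only recovers the Lyapunov \emph{inequality}, not the equation stated in (\ref{eqn:lyapunov}); this is harmless because under stability of $\A_{cl}$ the inequality forces $\Q$ to majorize the true Gramian, so $\tr(\B_{cl}^T\Q\B_{cl})$ still upper-bounds $||\Gcal_{cl}||_{\Htwo}^2$ and $J'$ remains a valid surrogate cost.
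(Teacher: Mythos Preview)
Your proposal is correct and follows essentially the same route as the paper's proof: the same Young-type overbound $\He[\M\N]\le \M\W\M^T+\N^T\W^{-1}\N$ on the bilinear residuals, the same Schur-complement dilation to produce (\ref{c1}) and (\ref{c3}), the same linearization $\Q^{-1}\ge 2\Q_0^{-1}-\Q_0^{-1}\Q\Q_0^{-1}$ for (\ref{c2}), and the same use of $\D_{21}\B_1^T=\zero$ to eliminate the cross trace. Your remark about (\ref{c1}) yielding only the Lyapunov \emph{inequality} is also handled identically in the paper, which explicitly relaxes (\ref{eqn:lyapunov}) to $\leq 0$ before overbounding.
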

\begin{proof}
	The proof follows the overbounding technique from \cite{Warner2017a}, begining by converting Objective~\ref{eqn:H2obj} into a convex function with an added convex constraint, \autoref{c2}. Then, ICO transforms Equations~\ref{eqn:lyapunov},~\ref{eqn:conic} into Equations~\ref{c1},~\ref{c3}, which are LMIs in $\dQ$, $\dK$, $\dP$. Last, it is shown that these transformations imply Equations~\ref{costConserve}-\ref{lower_bound}.
	
	To begin, the objective in \autoref{eqn:H2obj} is distributed as $\tr(\Bt^T\Q\Bt) + \tr(\He[\Bt^T\Q\E\K\bS]) +  \tr(\bS^T\K^T\E^T\Q\E\K\bS)$. The first term is convex in a single design variable. Using the cyclic property of the trace, the second term is equivallently $\tr(\He[\D_{21}\B_1^T\Q^{12}\Bhat])$, where $\Q^{12}$ is the top-right block of $\Q$ when paritioned appropriately. This term is zero due to the standard assumption that $\D_{21}\B_1^T = \zero$. The third term is cubic in $\Q$ and $\K$. Consider a new variable $\Z$ such that $\Z - \bS^T\K^T\E^T\Q\E\K\bS \geq 0$. Minimizing $\tr(\Z)$ subject to this inequality is equivallent to minimizing the cubic term. Applying the Schur complement to the constraint yields an LMI in $\Z$, $\K$, $\Q^{-1}$. For any $\Qt>0$, $\Q>0$, it is known that
	\begin{eqnarray}
		\Q^{-1} \geq 2\Qt^{-1} - \Qt^{-1}\Q\Qt^{-1}. \label{conservative}
	\end{eqnarray}
	Applying this fact to the Schur complement yields
	\begin{eqnarray} \label{bound}
		\bmat{2\Qt^{-1} - \Qt^{-1}\Q\Qt^{-1} & * \\ (\E\K\bS)^T & \Z } \geq 0, \label{c2_almost}
	\end{eqnarray}
	which is linear in $\Q$, $\K$, and $\Z$. Now minimizing $\tr(\Z)$ is conservative, which implies \autoref{costConserve}.
	
	\autoref{eqn:lyapunov} is relaxed to a negative semi-definite inequality because the optimal solution converges to the equality boundary and negative semi-definiteness is sufficient to guarantee stability and a bounded solution domain. This bilinear inequaliy is treated with ICO \cite{Warner2017a}. Let $\K = \K_0 + \dK$ and $\Q = \Q_0 + \dQ$, where the new design variables $\{\dQ,\,\dK\}$ represent change from a known feasible point, $\{\K_0,\,\Q_0\}$. These substitutions yield
$
		 \Pi_1 + \He[\dQ\E\dK\R] + \R^T\dK^T\F^T\F\dK\R \leq 0
$
	where $\Pi_1$ is linear in the new variables. Next,
	\begin{eqnarray*}
		& \! \! \! \! \! \! \! \! \! (\dQ\E\bL - \R^T\dK^T\bL^{-T})(\dQ\E\bL - \R^T\dK^T\bL^{-T})^T =& \\
		& \! \! \! \! \! \! \! \! \! \dQ\E\bL\bL^T\E\E^T\dQ^T + \R^T\dK^T\bL^{-T}\bL^{-1}\dK\R + \He[\dQ\E\dK\R]&
	\end{eqnarray*}
	for any invertable $\bL$, which implies
	$
		\He[\dQ\E\dK\R] \leq \dQ\E\W_1\E^T\dQ^T + \R^T\dK^T\W_1^{-1}\dK\R
	$
	for any $\W_1 = \W_1^T>0$. This allows \autoref{eqn:lyapunov} to be rewritten conservatively as
	$
		\Pi_1 +\dQ\E\W_1\E^T\dQ^T + \R^T\dK^T(\W_1^{-1}+\F^T\F)\dK\R \leq 0.
	$
	Now the quadratic terms are removed using the Schur complement, which yields directly \autoref{c1}. An identical approach can be applied to \autoref{eqn:conic}, which becomes \autoref{c3} with $\W_2 = \W_2^T > 0$, $\bP = \bP_0 + \dP$, and
	\begin{eqnarray*}
		\Pt = \Pt_0 + \dPt = \bmat{\zero & \bP_0 \\ \zero & \zero \\ \I & \zero} + \bmat{\zero & \dP \\ \zero & \zero \\ \zero & \zero}.
	\end{eqnarray*}

	Applying the same change of variables in $\Q$ and $\K$ to \autoref{c2_almost} and selecting $\Qt = \Q_0$ yields \autoref{c2}.
	
	Thus any $\dK$, $\dQ$, $\dP$, $\Z$ satisfying Equations~\ref{c2}--\ref{c5} implies $\K$, $\Q$, $\bP$ satisfy Equations~\ref{eqn:lyapunov}--\ref{eqn:P} and the costs are related by Equation~\ref{costConserve}. Further, if $\dK=\zero$, $\dQ=\zero$, $\dP=\zero$, then Equations~\ref{c1}--\ref{c5} reduce to Equations~\ref{eqn:lyapunov}--\ref{eqn:P}, and Equations~\ref{main_obj}--\ref{c2} reduce to Equation~\ref{eqn:H2obj}. This implies Equation~\ref{equality}. Since $\K_0$, $\bP_0$, and $\Q_0$ are feasible points, the cost given by Problem~\ref{main_obj}--\ref{c5} cannot be worse than the initial cost, hence \autoref{upper_bound}. \autoref{lower_bound} follows from the definition of the minimum and the conservatism introduced by \autoref{conservative} and ICO.
\end{proof}

Due to the properties of Problem~\ref{deltaOpt2}--\ref{c5} found in Theorem~\ref{main_theorem}, Algorithm 1 gives a conic controller with a lower $\Hcal_2$-norm than that of the initialization. This is formalized in Corollary 1. Initializations are considered in the sequel.

\begin{algorithm}\label{main_alg}
	\SetAlgoLined
	\textbf{Input:} $\Gcal$, $a_c$, $b_c$, $\K_0$, $\Q_0$, $\bP_0$
	\\
	
	\textbf{Set }$k = 0$, $\Delta J' = \infty$\\
	\While{$\Delta J' > \epsilon$}
	{
		Solve Problem \ref{deltaOpt2}--\ref{c5} with $\K_0{=}\K_k$, $\Q_0{=}\Q_k$, $\bP_0{=}\bP_k$
		\\
		$\Q_{k+1} = \Q_{k} + \dQ$ \\
		$\bP_{k+1} = \bP_{k} + \dP$ \\
		$\K_{k+1} = \K_{k} + \dK$ \\
		$\Delta J' = |J'(\K_{k+1},\Q_{k+1}) - J'(\K_k,\Q_k)| $ \\
		k = k+1 
	}
	\textbf{Output:} $\K_{k+1}$, $\Q_{k+1}$, $\bP_{k+1}$
	\caption{}\label{alg:iter}
\end{algorithm}
\begin{cor} \label{thm:decreasing_bounded}
	At iteration $k$ of Algorithm 1, Problem \ref{deltaOpt2}--\ref{c5} has a feasible solution whose cost, $J'(\K_{k},\Q_{k})$ is bounded above by the cost at the $(k-1)^{\mbox{st}}$ iteration, $J'(\K_{k-1},\Q_{k-1})$,  and bounded below by the optimal cost of Problem~\ref{KQP}, $J(\K_{opt},\Q_{opt})$. Further, Algorithm 1 terminates in at most $(J'(\K_0,\Q_0)-J_{\Htwo})/\epsilon$ iterations, where $J_{\Htwo}$ is the cost of the $\Htwo$-optimal controller without the conic constraint.
\end{cor}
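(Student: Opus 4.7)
The plan is to derive all three claims of the corollary directly from Theorem \ref{main_theorem}, specifically the four inequalities \ref{costConserve}--\ref{lower_bound}, together with the observation that the zero-shift $\dK=\dQ=\dP=\zero$ is always admissible. No new LMI manipulation is needed; the argument is essentially bookkeeping over the iterate index.

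I would first establish feasibility at every iteration by induction on $k$. The hypothesis that $(\K_0,\Q_0,\bP_0)$ satisfies \ref{eqn:lyapunov}--\ref{eqn:P} supplies the base case: substituting the zero-shift into Problem \ref{deltaOpt2}--\ref{c5}, constraints \ref{c1} and \ref{c3} reduce via Schur complements exactly to \ref{eqn:lyapunov} and \ref{eqn:conic}, constraint \ref{c5} reduces to $\Q_0>0,\bP_0>0$, and \ref{c2} holds for any $\Z \geq \bS^T\K_0^T\E^T\Q_0\E\K_0\bS$. For the inductive step, Theorem \ref{main_theorem} certifies that the updated triple $(\K_k,\Q_k,\bP_k)$ again satisfies \ref{eqn:lyapunov}--\ref{eqn:P}, so the same zero-shift argument carries over, proving feasibility at every iteration.

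Next I would deduce the monotone decrease and the lower bound. The zero-shift point evaluates $J'$ at its reference, which by \ref{equality} equals $J(\K_k,\Q_k)$; since iteration $k$ minimizes $J'$ over all feasible shifts, \ref{upper_bound} gives $J'(\K_{k+1},\Q_{k+1}) \leq J'(\K_k,\Q_k)$, which after re-indexing is the claimed upper bound by the cost at the previous iteration. The lower bound $J(\K_{opt},\Q_{opt}) \leq J'(\K_k,\Q_k)$ is simply \ref{lower_bound} applied at the iterate.

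For the termination count, the algorithm only continues while $\Delta J' > \epsilon$, so combined with the monotone decrease, $J'$ falls by strictly more than $\epsilon$ per iteration. Because the unconstrained $\Htwo$ problem is a relaxation of Problem \ref{KQP}, we have $J_{\Htwo} \leq J(\K_{opt},\Q_{opt}) \leq J'(\K_k,\Q_k)$ for all $k$. Telescoping over $N$ iterations then yields $N\epsilon < J'(\K_0,\Q_0) - J'(\K_N,\Q_N) \leq J'(\K_0,\Q_0) - J_{\Htwo}$, establishing the stated bound. The only delicate point is the feasibility reduction at the zero-shift: once one verifies that the Schur-complemented forms of \ref{c1} and \ref{c3} collapse back to the nominal conditions \ref{eqn:lyapunov} and \ref{eqn:conic}, every remaining piece of the corollary falls out immediately from Theorem \ref{main_theorem}.
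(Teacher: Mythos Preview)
Your proposal is correct and matches the paper's intended argument. The paper does not spell out a separate proof for the corollary; it simply states that the result follows from the properties established in Theorem~\ref{main_theorem} (Equations~\ref{costConserve}--\ref{lower_bound}), which is precisely the bookkeeping you carry out---induction on feasibility via the zero-shift, monotonicity from \ref{upper_bound} and \ref{equality}, the lower bound from \ref{lower_bound}, and termination by telescoping against the relaxed bound $J_{\Htwo}\leq J(\K_{opt},\Q_{opt})$.
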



\subsection{Initialization} \label{initialization}

In this section, initialization schemes are proposed for the five parameters required to begin Algorithm 1. Two initialization schemes are proposed for $\W_1$ and $\W_2$ using various assumptions on $\dK$, $\dQ$, and $\dP$. Both attempt to minimize the conservatism introduced by $\W_1$ and $\W_2$. Meanwhile, $\K_0$, $\Q_0$, and $\bP_0$ must be initialized together. These have the greatest impact on the solution as their initialization determines to which local minimum the problem will converge. An arbitrary initialization method is proposed for its simplicity, and its pitfalls are described. Then two heuristic methods are proposed which provide an initialization point that is ``close" to the optimal controller without conic bounds.

\subsubsection{W Initialization}
The only necessary condition on $\W_1$ and $\W_2$ is positive definiteness, so the simplest option is to set both to identity. In fact, this choice works well in practice, as demonstrated in \autoref{example}. This is because the conservatism introduced by $\W_1$and $\W_2$ is dictated by the size of $\dQ\E\W_1\E^T\dQ^T + \R^T\dK^T\W_1^{-1}\dK\R$ and  $\dPt\W_2\dPt + \X^T\dK^T\W_2^{-1}\dK\X$, respectively. Both terms are small when $\dK$, $\dQ$, and $\dP$ are small. An alternative heuristic seeks to minimize these terms directly, assuming that $\dK$, $\dQ$, and $\dP$ are of the same size and not arbitrarily small. Setting $\dK$, $\dQ$, and $\dP$ to identity, the ``size'' of the conservative terms may be defined as $\tr(\E\W_1\E) + \tr(\R^T\W_1^{-1}\R)$ and $\tr(\W_2) + \tr(\X^T\W_2^{-1}\X)$, respectively. These are minimized via
\begin{eqnarray}
	\!\!\!\min_{\W_1,\V_1} && \! \! \! \tr(\E\W_1\E^T) + \tr(\V_1) \; \; \; \st \; \; \; \bmat{\W_1 & \R \\ \R^T & \V_1} \geq 0, \label{W1init}\\
	\!\!\!\min_{\W_2,\V_2} && \! \! \! \tr(\W_2) + \tr(\V_2) \; \; \; \st \; \; \; \bmat{\W_2 & \X \\ \X^T & \V_2} \geq 0. \label{W2init}
\end{eqnarray}
These optimization problems are derived by the same process that yields \autoref{c2_almost} without applying \autoref{conservative}. Since $\W_1 = \I$ and $\W_2 = \I$ are feasible points for every permissable $\E$, $\R$, and $\X$, these problems are always solvable.

\subsubsection{KQP Initialization: Arbitrary}
The only requirement on $\K_0$, $\Q_0$, $\bP_0$ is that they constitute a feasible point of Problem \ref{KQP}--\ref{eqn:P}. Thus, any conic controller could form a basis for the initialization. Given a known conic controller, $\K_0$ is compiled directly from the parameters, the algebraic Riccati Equation \ref{eqn:lyapunov} is solved for $\Q = \Q_0$, and a feasible $\bP = \bP_0$ is selected subject to Equations \ref{eqn:conic} and \ref{eqn:P}.

In practice, the choice of controller is further restricted. If $\K_0$ is small, then $\dK$ must be much smaller for Problem \ref{KQP}--\ref{eqn:P} to be near-nonconservative. In fact, if $\K_0 = \zero$ (which is trivially interior conic for all $a_c$, $b_c$) then any $\dK \neq \zero$ is infinitely conservative. In this case, Algorithm 1 immediately converges to the initialization point. Thus a sufficiently large $\K_0$ must be selected for good results, which complicates the already nontrivial task of constructing a conic controller.

\subsubsection{KQP Initialization: ConicC}
Even if an arbitrary controller can be directly constructed, it may not yield good results. While no initialization is guaranteed to achieve the global minimum due to the problem's nonconvexity, heuristic methods aim to select an initialization point that is in some sense already ``close" to the optimum. One such method is the ConicC algorithm \cite{Bridgeman2014}. This algorithm starts with the Luenberger optimal controller and changes $\Chat$ minimally to satisfy the conic bounds. The algorithm already calculates $\bP_0$,  and $\K_0$ is constructed from the resultant controller. Solving the algebraic Riccati Equation \ref{eqn:lyapunov} then provides $\Q_0$. Any other conic design method can be similarly employed. The ConicC problem is always feasible given a stable target controller. This is because the problem is convex and Lemma \ref{ConicClemma} below gives a feasible point. The original ConicC formulation uses the $\mathcal{H}_2$-optimal Luenberger controller as the target. While the Luenberger controller is usually open-loop stable, it is not always \cite{Johnson1979}. Nonetheless, a different target controller can be selected arbitrarily or by a variety of stable compensator designs \cite{MacMartin1994} to recover feasibility of the ConicC algorithm.

\begin{lemma} \label{ConicClemma}
	All stable LTI systems without feedthrough are interior conic for some $a<0<b$. Further, $\forall \; \mathcal{G}:(\A,\B,\C,\zero) \in \cone(a_0,b_0) \nsubseteq \cone(a,b)$ where $a_0,a<0$ and $b_0,b >0$, $\mathcal{G}':(\A,\B,\C',\zero) = (\A,\B,\min(\nicefrac{a}{a_0},\nicefrac{b}{b_0})\C,\zero) \in \cone(a,b)$. 
\end{lemma}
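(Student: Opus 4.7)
The plan is to split the lemma into its two assertions and treat each with \autoref{CSL} and elementary manipulation of the defining conic inequality. For the existence claim (the first sentence), I would exhibit a symmetric sector $a = -\alpha$, $b = \alpha$ and apply the contracted form \autoref{contracted_conic}. Under $a = -b$ the cross-term coefficients vanish and the inequality collapses to
\[
\bP\A + \A^T\bP + \C^T\C + \alpha^{-2}\bP\B\B^T\bP \leq 0.
\]
Since $\A$ is Hurwitz, the Lyapunov equation $\bP\A + \A^T\bP = -\C^T\C - \I$ admits a unique solution $\bP > 0$, which reduces the constraint to $-\I + \alpha^{-2}\bP\B\B^T\bP$; choosing $\alpha^2 > \lambda_{\max}(\bP\B\B^T\bP)$ makes this strictly negative definite, so \autoref{CSL} certifies $\mathcal{G}\in\cone(-\alpha,\alpha)$ as required.

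For the scaling claim I would proceed in two steps. First, starting from the defining inequality for $\mathcal{G}\in\cone(a_0,b_0)$, multiply through by $k > 0$ and substitute $\y' = k\mathcal{G}\bu$ so that $\|\y'\|^2 = k^2\|\mathcal{G}\bu\|^2$ and $\langle\y',\bu\rangle = k\langle\mathcal{G}\bu,\bu\rangle$; using the identity $a_0/b_0 = ka_0/kb_0$, the rescaled inequality reassembles into exactly the definition of $k\mathcal{G}\in\cone(ka_0,kb_0)$. Second, the choice $k = \min(a/a_0, b/b_0)$ forces the interval nesting $a \leq ka_0 < 0 < kb_0 \leq b$, at which point I would invoke the containment of conic sectors under interval nesting to conclude $k\mathcal{G}\in\cone(a,b)$.

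The hardest step is justifying this last nesting. Because the cross-term coefficient $(1 + a/b)$ in the defining inequality does not vary monotonically in $a$ and $b$, simply rescaling the Lyapunov certificate of $\cone(ka_0,kb_0)$ does not produce one for $\cone(a,b)$. My planned workaround is to rewrite the defining inequality in the factored form $\langle \y - a\bu,\, \y - b\bu\rangle_T \leq -b\beta$, whose meaning is that the Nyquist image of $\mathcal{G}$ lies inside the closed disk with real-axis diameter $[a,b]$; nested intervals produce nested disks, hence nested sectors. If a fully LMI-based argument is preferred, I would instead construct a feasible $\bP$ for the target CSL by additively perturbing the certificate of $\cone(ka_0,kb_0)$ and absorbing the sign-indefinite cross term using the strict-interior slack inherent in $\mathcal{G}\in\cone(a_0,b_0)$.
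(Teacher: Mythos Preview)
Your proposal is correct and follows essentially the same route as the paper. For the existence claim, both you and the paper specialize \autoref{contracted_conic} to the symmetric sector $a=-b$, pick a Lyapunov $\bP$ solving $\bP\A+\A^T\bP=-\Q$ for a suitable $\Q\succ\C^T\C$ (you take $\Q=\C^T\C+\I$, the paper takes $\Q=\C^T\C+\B\B^T$), and then inflate the radius to dominate the $\bP\B\B^T\bP$ term. For the scaling claim, both arguments rely on $k\mathcal{G}\in\cone(ka_0,kb_0)$ (the paper cites \cite{Zames1966}, you rederive it from the defining inequality), set $k=\min(a/a_0,b/b_0)$, and check that $[ka_0,kb_0]\subseteq[a,b]$. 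The only substantive difference is that you explicitly flag and justify the sector-nesting step via the factored form $\langle\y-a\bu,\,\y-b\bu\rangle_T\le -b\beta$ and its disk interpretation, whereas the paper invokes the containment implicitly; your treatment is the more careful one here, and the LMI-perturbation fallback you sketch is unnecessary once the factored form is in hand.
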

	\begin{proof}
		By the Lyapunov Stability Theorem \cite{Dullerud2005}, since $\A$ is Hurwitz, $\forall$ $\Q$, $\exists$ $\bP=\bP^T>0$ such that $\bP\A + \A^T\bP <-\Q$. In particular, $\Q = \C^T\C+\B\B^T$ can be chosen. If $\gamma  > \max|\lambda(\bP)|$, then $\nicefrac{1}{\gamma^2}\bP\B\B^T\bP < \B\B^T$. The Lyapunov equation then implies \autoref{contracted_conic} holds with $a = -\gamma$, $b = \gamma$. Thus by Lemma~\ref{CSL}, $\mathcal{G} \in \cone(-\gamma,\gamma)$.

		 Further, if $\mathcal{G} \in \cone(a,b)$ with $b>0$ and $k \geq 0$, then $k\mathcal{G} \in \cone(ka,kb)$ \cite{Zames1966}. Scaling $\mathcal{G}:u\rightarrow y$ is achieved by scaling the output matrix, $\C$. Suppose $a_0<a$ and $b_0>b$. 	In this case, $\nicefrac{a}{a_0}<1$ and $\nicefrac{b}{b_0}<1$, so $\min(\nicefrac{a}{a_0},\nicefrac{b}{b_0})a_0 > a$ and  $\min(\nicefrac{a}{a_0},\nicefrac{b}{b_0})b_0 < b$. Thus, $\mathcal{G}' = \min(\nicefrac{a}{a_0},\nicefrac{b}{b_0})\mathcal{G} \in \cone(a,b)$. Further, if $a_0>a$, then $\min(\nicefrac{a}{a_0},\nicefrac{b}{b_0}) = \nicefrac{b}{b_0}$, and $\frac{b}{b_0}a_0 > a$, and vice versa for $b_0<b$. Thus any interior conic system can be transformed to satisfy any interior conic bounds of opposing signs by scaling the output matrix.
\end{proof}
\subsubsection{KQP Initialization: ICO}
An alternative method, based on a technique in \cite{Warner2017a}, iteratively relaxes the optimal controller until it statisfies the constriants. First, the Luenberger optimal controller (or similar) is found, which provides $\K_L$. Next, Equation \ref{eqn:lyapunov} is solved with $\K_L$ to get $\Q_L$. Then $\bP_L$ is given by minimizing $\epsilon$ over $\bP_L$ and $\epsilon$ subject to $\He[\Pt_L\K_L\X] + \Gamma \leq \epsilon \I$ and $\bP_L > 0$. If $\epsilon<0$, then $(\K_L,\Q_L,\bP_L)$ is a feasible point, and the process is terminated. Otherwise, minimize $\epsilon$ over $\dK$,  $\dQ$, $\dP$, $\Z$ subject to Equations \ref{c2}--\ref{c5}, where $\K_0 = \K_L$, $\Q_0 = \Q_L$, $\bP_0 = \bP_L$, and $\leq 0$ is replaced by $\leq \epsilon\I$ in \autoref{c3}. Also add the constraint $\tr(\Bt^T(\Q_L+\dQ)\Bt) + \tr(\Z) < (1+\Delta)J_L$, where $\Delta$ is a small positive constant and $J_L$ is the cost of the initial controller. This extra constraint controls the rate at which the $\Hcal_2$ cost is relaxed. Iterate this new problem similarly to Algorithm 1, updating $J_L$ at each step, until $\epsilon<0$. If $\epsilon<0$ is achieved then the corresponding $\K$, $\Q$, $\bP$ is a feasible point. This algorithm is not guaranteed to converge, but it provides greater flexibility than ConicC when it does.


\section{Numerical Example} \label{example}

Algorithm 1 is now applied to a vibration suppresion problem with parametric uncertainty and passivity violation. The plant is three masses connected by springs and dampers. The $i^{th}$ mass' dynamics are given by $\x_i = [p_i \; v_i]^T$, $\dot\x_i = \A_{ii}\x_i + \sum_{j\neq i} \A_{ij}\x_j + [0 \; 1]^Tu_i + [0 \; 1]^Tw_{i1}$, $y_i = \C_{ii}\x_i + w_{i2}$, $\z_i = [p_i \; v_i \; u_i]^T$, $\w_i = [w_{i1} \; w_{i2}]^T$, where $p_i$ and $v_i$ are the position and velocity of mass $i$, $\A_{ii}$ and $\A_{ij}$ are derived from \autoref{chain}, and $\C_{ii}$ depends on the chosen output measurement. 

The idealized plant with velocity measured outputs and nominal parameters is $\mathcal{G}_1$. A more realistic version of the plant accounting for the parametric uncertainty and passivity violations is $\mathcal{G}_2$. Parametric uncertainty is introduced informally by considering the discrete set of parameters. Parameters $m_i$ and $k_i$ have nominal value 1 and off-nominal values 0.3 and 3, while $c_i$ has nominal value 0.05 and off-nominal values 0.01 and 0.1. This is not a full treatment of parametric uncertainty, merely a proof of concept. To introduce a passivity violation, the measured output is the position of each mass filtered by the approximate derivative filter $f(s) = \frac{25s}{s^2+4s + 25}$. This filter causes a modest increase to the minimal $\Hcal_2$ norm but significant passivity violations.

All controllers in this example are designed for the state-space model of $\mathcal{G}_1$ and the conic sector of $\mathcal{G}_2$, then tested on $\mathcal{G}_2$. Conic sector $(-24.84, 62200)$ is sufficient for the discrete set of parameters considered here. More analysis would be required to show that the sector is sufficient to describe all parameter sets in the convex polytope.

Two fixed-order controllers, \textit{Cnew} and \textit{Inew} are designed using Algorithm 1 with the proposed ConicC and ICO initializations, respectively. Both use the proposed \textit{W} initializations, which interestingly converge to the identity matrices in each case. The convergence criterion on Algorithm 1 is $\epsilon = 5\times10^{-3}$. The ICO initialization uses increment $\Delta = 0.1$. Algorithm 1 and the ICO initialization experienced numerical issues, but these were remedied by regularizing the design variables by augmenting the objective with the squared Frobenius norms of $\delta\K$, $\dP$, and/or $\delta\Q$ times a small constant $\gamma$. For Algorithm 1, $\gamma = 0.1$ was successful, and $\gamma = 10^{-3}$ was successful for the ICO initialization.

The novel \textit{Cnew} and \textit{Inew} controllers' performance is compared to several benchmark controllers: the $\mathcal{H}_2$\textit{-Optimal} controller as designed for $\mathcal{G}_1$ (which is passive), the \textit{ConicC} controller \cite{Bridgeman2014}, the \textit{Iterative Conic} controller \cite{Bridgeman2019a} with convergence criterion $\epsilon=10^{-6}$, and the open loop with no controller. \autoref{costTable} shows that \textit{Cnew} and \textit{Inew} significantly improve over the \textit{ConicC} and \textit{Iterative Conic} controllers for the nominal parameters. Interestingly, the resulting controllers are substantially different even though their performance is similar. \autoref{histogram} shows that the conic controllers are stable for every parameter set, whereas the (passive) $\mathcal{H}_2$-Optimal controller is unstable for almost 40\% of the simulations. Further, out of the five controllers that stabilize every simulation, \textit{Cnew} performed the best for about 60\% of simulations, while \textit{Inew}, performed the best for about 40\%. \textit{ConicC} performed about 10\% worse on all simulations, with \textit{Iterative Conic} slightly worse still. The design curves in \autoref{designcurves} emphasize that \textit{Inew} required an order of magnitude more iterations and converges to a slightly worse performance than \textit{Cnew}.

 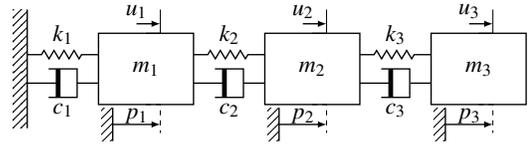
\begin{figure}
	\centering
	\resizebox{.8\columnwidth}{!}{%
		\begin{tikzpicture}[every node/.style={outer sep=0pt},thick,
 mass/.style = {draw,thick},
 spring/.style = {thick,decorate,decoration={zigzag,pre length=0.3cm,post length=0.3cm,segment length=6}},
 ground/.style = {fill,pattern=north east lines,draw=none,minimum width=0.75cm,minimum height=0.3cm},
 dampic/.pic={
 \fill[white] (-0.1,-0.3) rectangle (0.3,0.3);
 \draw (-0.3,0.3) -| (0.3,-0.3) -- (-0.3,-0.3);
 \draw[line width=1mm] (-0.1,-0.3) -- (-0.1,0.3);
 },
roundnode/.style={circle, draw=black, fill=black, scale = 0.25, minimum size=1mm}
 ]

  \node[mass,minimum width=2cm,minimum height=1.5cm] (m1) {\Large $m_1$};
  \node[mass,minimum width=2cm,minimum height=1.5cm,right=1.5cm of
  m1] (m2) {\Large $m_2$};
   \node[mass,minimum width=2cm,minimum height=1.5cm,right=1.5cm of
  m2] (m3) {\Large $m_{3}$};
  
   \node[left=1.5cm of m1,ground,minimum width=3mm,minimum height=2.5cm] (g1){};
  \draw (g1.north east) -- (g1.south east);

  \draw[spring] ([yshift=3mm]g1.east) coordinate(aux)
   -- (m1.west|-aux) node[midway,above=1mm]{\Large $k_1$};
  \draw[spring]  (m1.east|-aux) -- (m2.west|-aux) node[midway,above=1mm]{\Large $k_2$};
  \draw[spring]  (m2.east|-aux) -- (m3.west|-aux) node[midway,above=1mm]{\Large $k_3$};

  \draw ([yshift=-3mm]g1.east) coordinate(aux')
   -- (m1.west|-aux') pic[midway]{dampic} node[midway,below=3mm]{\Large$c_1$}
     (m1.east|-aux') -- (m2.west|-aux') pic[midway]{dampic} node[midway,below=3mm]{\Large $c_2$}
     (m2.east|-aux') -- (m3.west|-aux') pic[midway]{dampic} node[midway,below=3mm]{\Large $c_3$};

  \foreach \X in {1,2,3}  
  {\draw[thin] (m\X.north) -| ++ (0.3,0.5) coordinate[pos=.7](aux\X);
   \draw[latex-] (aux\X) -- ++ (-0.5,0) node[above]{\Large $u_\X$}; 
   \draw[thin,dashed] (m\X.south) -| ++ (0.3,-0.6) coordinate[pos=0.85](aux'\X);
   \draw[latex-] (aux'\X) -- ++ (-1,0) node[midway,above,yshift=-1mm]{\Large $p_\X$}
    node[left,ground,minimum height=7mm,minimum width=1mm] (g'\X){};
   \draw[thick] (g'\X.north east) -- (g'\X.south east);
  }

  \end{tikzpicture}
	}
	\caption{Example system}
	\label{chain}
\end{figure}

\begin{table}
	\centering
	\begin{tabular}{|c| c c c c|} 
		\hline
		 & \makecell{Cost on \\ nominal $\mathcal{G}_2$} & \makecell{\% increase \\ from Optimal \\ for $\mathcal{G}_2$} & \makecell{\# of \\ Iter.} & \makecell{In \\ cone?} \\
		\hline
		 $\Hcal_2$-Optimal for $\mathcal{G}_1$  & 12.98 & 2\% & n/a & no \\
		  ConicC \cite{Bridgeman2014}  & 77.98 & 514\% & n/a & yes \\
		  Iterative Conic \cite{Bridgeman2019a}  & 73.74 & 480\% & 17 & yes \\
		  Inew & 72.83 & 473\% & 689 & yes \\
		  Cnew & 72.10 & 467\% & 34 & yes \\
		\hline
	\end{tabular}\caption{Nominal closed-loop $\Hcal_2$-norm comparisons.}\label{costTable}
\end{table}%

\begin{figure}[h!]
	\setlength\fwidth{.75\columnwidth}
	\centering
	\input{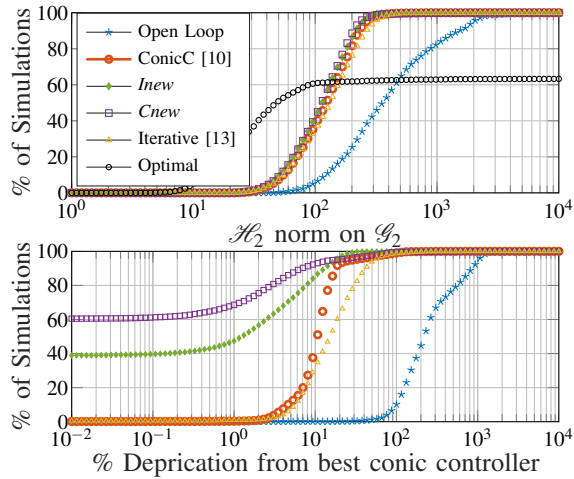}
	\caption{\textsc{Top:} Percent of simulations with closed-loop $\Hcal_2$ norm below a given cost. \textsc{Bottom:} Percent of simulations with a closed-loop $\Hcal_2$ norm that differs from the best conic controller (out of the five) by less than a given percent error.}
	\label{histogram}
\end{figure}

\begin{figure}
	\setlength\fwidth{1.2\columnwidth}
	\centering
	\input{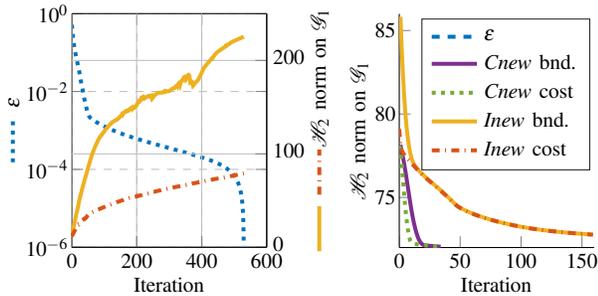}
	\caption{\textsc{Left:} ICO initialization. \textsc{Right:} Algorithm 1 for \textit{Cnew} and \textit{Inew}. Both plot the cost overbound calculated by the algorithms and the actual cost on the idealized system.}
	\label{designcurves}
\end{figure}


\section{Discussion} \label{conclusion}

Motivated by the need to incorporate secondary objectives like structured communication into controller design, this paper develops a fixed-order $\Hcal_2$-conic synthesis that allows direct access to design variables. A convergent series of convex problems are posed that minimize the $\Hcal_2$ norm while maintaining desired conic bounds without restricting design freedom or transforming controller parameters into implicit variables. The method's conservatism is concentrated mostly in the choice of initialization, and initialization schemes are proposed based on successful heuristics. 

Ultimately, the analytical results herein are intermediates to improved sparsity promoting and structured $\Hcal_2$-conic designs. While those results are immenant, they remain nontrivial. Thus, complete development of those new designs are left as the subject of future work. Other future work includes extending these methods to more general IO stability results like dissipativity. Nonetheless, this design presents immediate advantages. Numerical simulations show that the increased design freedom significantly improves performance compared to existing $\Hcal_2$-conic designs when applied to an example system with passivity violation and uncertain parameters. These simulations also emphasize the value of $\Hcal_2$-conic control in general, as all of the conic designs considered guranteed stability for every parameter set, while the (passive) optimal controller was unstable for about 40\%.

\addtolength{\textheight}{0cm}   







\bibliographystyle{IEEEtran}
\bibliography{IEEEabrv,root}

\end{document}